\documentclass{amsart}
\usepackage{amsmath, amsfonts, amsthm, amssymb, mathrsfs, hyperref}

\newtheorem{theorem}{Theorem}
\newtheorem{lemma}{Lemma}
\newtheorem*{theorem*}{Theorem}
\theoremstyle{definition}

\newcommand{\N}{\mathbb{N}}
\newcommand{\R}{\mathbb{R}}
\newcommand{\Z}{\mathbb{Z}}
\newcommand{\y}{\mathbf{y}}
\newcommand{\x}{\mathbf{x}}
\newcommand{\q}{\mathbf{q}}
\newcommand{\p}{\mathbf{p}}
\newcommand{\s}{\mathbf{s}}
\newcommand{\aq}{\mathbf{a}_q}
\newcommand{\scrA}{\mathscr{A}}

\title[Inhomogeneous Khintchine theorem with polynomial decay]{Khintchine's theorem for inhomogeneous \makebox[\textwidth]{simultaneous approximation with polynomial decay}}
\author{Seongmin Kim}
\address{Department of Mathematical Sciences, Seoul National University, 1, Gwanak-ro, Gwanak-gu, Seoul, 08826, Republic of Korea}
\email{seongmin.kim@snu.ac.kr}
\date{}

\begin{document}

\begin{abstract}
    Khintchine's theorem on the measure dichotomy for the set of $\psi$-approximable numbers has been generalized to inhomogeneous and higher-dimensional settings. Allen and Ram\'irez conjectured that the monotonicity condition can be removed in the inhomogeneous $nm=2$ cases. In this paper, we resolve the $(n,m)=(1,2)$ case for $\psi$ satisfying a polynomial decay condition $\psi(q)=O(q^{-\delta})$ for some $\delta>0.$
\end{abstract}

\maketitle

\section{Introduction}
Khintchine's theorem \cite{Khi24} on metric Diophantine approximation states that the set of $\psi$-approximable numbers:
$$\scrA_{1,1}(\psi) := \{x\in[0,1): |qx-p|<\psi(q) \text{ for infinitely many } (q,p)\in\N\times\Z\}$$
has full or zero Lebesgue measure if the sum $\sum_{q=1}^\infty \psi(q)$ diverges or converges, respectively, for a monotonic function $\psi:\N \to \R_{\geq 0}.$ This theorem has been generalized to simultaneous approximation \cite{Khi26}, approximation of systems of linear forms \cite{Gro38}, inhomogeneous approximation \cite{Szu58}, and inhomogeneous approximation of systems of linear forms \cite{Sch64} which can be stated as follows. 

Let $n,m \in \N.$ For a function $\psi:\N\to \R_{\geq0}$ and a vector $\y \in \R^m,$ we say that an $n\times m$ matrix $\x \in \R^{nm}$ is \emph{$(\psi,\y)$-approximable} if there are infinitely many pairs of integer row vectors $(\q,\p) \in (\Z^n\setminus\{\mathbf{0}\}) \times \Z^m$ satisfying
    $$|\q\x - \p - \y | < \psi(|\q|),$$
where $|\cdot|$ denotes the maximum norm on $\R^m$ or $\R^n.$ We define $\scrA_{n,m}(\psi,\y)$ as the set of $(\psi,\y)$-approximable matrices in $[0,1)^{nm}.$ Then we can state the inhomogeneous, higher-dimensional generalization of Khintchine's theorem as follows.
\begin{theorem*}[Inhomogeneous Khintchine--Groshev]
    Let $n,m\in\N,\ \psi:\N \to \R_{\geq 0}$ and $\y \in \R^m.$ Then we have
    $$|\scrA_{n,m}(\psi,\y)| = \begin{cases} 0 & \text{ if \ } \sum_{q=1}^\infty q^{n-1}\psi(q)^m<\infty, \\ 1 & \text{ if \ } \sum_{q=1}^\infty q^{n-1}\psi(q)^m = \infty \text{ and } \psi \text{ is monotonic,}\end{cases}$$
    where $|\cdot|$ here denotes the Lebesgue measure of the set.
\end{theorem*}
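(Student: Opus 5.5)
The plan has two parts, matching the two cases of the statement. The convergence half is an application of the Borel--Cantelli lemma. The divergence half combines a quasi-independence (Chung--Erd\H{o}s / Kochen--Stone) estimate, which yields positive measure, with a zero--one law that upgrades positive measure to full measure.

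\emph{Convergence.} For each $\q\in\Z^n\setminus\{\mathbf 0\}$ let
$$E_\q=\{\x\in[0,1)^{nm}: |\q\x-\p-\y|<\psi(|\q|) \text{ for some } \p\in\Z^m\}.$$
Since $\q\neq \mathbf 0$, the map $\x\mapsto \q\x \bmod \Z^m$ pushes Lebesgue measure on the torus forward to Lebesgue measure on $[0,1)^m$. This is seen by fixing an index $i$ with $q_i\neq0$ and integering over the $i$-th row of $\x$ first. Hence
$$|E_\q|\le \min\{1,(2\psi(|\q|))^m\}.$$
The number of $\q$ with $|\q|=q$ is $\asymp q^{n-1}$. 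Therefore $\sum_\q|E_\q|\ll\sum_q q^{n-1}\psi(q)^m<\infty$. By Borel--Cantelli, almost every $\x$ lies in only finitely many $E_\q$. For a fixed $\q$ only finitely many $\p$ can satisfy the inequality, so such $\x$ admit only finitely many pairs $(\q,\p)$. This gives $|\scrA_{n,m}(\psi,\y)|=0$. Monotonicity is not used here.

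\emph{Divergence, reduction.} Assume $\psi$ is monotonic and $\sum q^{n-1}\psi(q)^m=\infty$. We may assume $\psi(q)\to0$, since otherwise $\psi\ge c>0$ and the claim follows from Dirichlet/Kronecker-type arguments or by replacing $\psi$ with a smaller function. We may also assume $\psi(q)<1/2$. The argument then proceeds in two steps.

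First, show that $|\limsup E_\q|>0$ via the Kochen--Stone inequality
$$|\limsup E_\q|\ge\limsup_{N}\frac{\bigl(\sum_{|\q|\le N}|E_\q|\bigr)^2}{\sum_{|\q|,|\q'|\le N}|E_\q\cap E_{\q'}|}.$$
Second, invoke a zero--one law for inhomogeneous approximation, as in Cassels' or Gallagher's type of argument via ergodicity of the maps $\x\mapsto k\x$ on the torus, to upgrade positive measure to full measure.

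\emph{Divergence, the overlap estimate (main obstacle).} The key step is to bound $|E_\q\cap E_{\q'}|$.

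When $\q,\q'$ are linearly independent, the map $\x\mapsto(\q\x,\q'\x)\bmod\Z^{2m}$ is measure-preserving onto $[0,1)^{2m}$ whenever $n\ge2$. In that case $|E_\q\cap E_{\q'}|=|E_\q||E_{\q'}|$ exactly, and the variance term is controlled.

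The difficulty lies in the dependent pairs. These are all pairs when $n=1$, and pairs with $\q'=(a/b)\q$ when $n\ge2$. Writing $\q=d\mathbf r$ and $\q'=d'\mathbf r$ with $\mathbf r$ primitive reduces matters to the one-dimensional problem on $\mathbf r\x\in\R^m/\Z^m$ with multipliers $d,d'$. There one must estimate the measure of points $z$ with $\|dz-\y\|<\psi$ and $\|d'z-\y\|<\psi'$, where $\|\cdot\|$ denotes distance to $\Z^m$. Counting solutions of the associated congruences gives a bound of the form
$$\ll \psi(d)^m\psi(d')^m+\frac{\gcd(d,d')^m}{\max(d,d')^m}\min(\psi(d),\psi(d'))^m .$$
This is where monotonicity enters: as in Cassels' proof of the $n=m=1$ case, one restricts to a suitable subsequence of dyadic blocks, or to $d$ in a set of positive density with controlled gcd structure. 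The gcd error term then sums to $O\bigl((\sum|E_\q|)^2\bigr)$, because monotone divergence lets us pass to $\psi'(q)=\psi(q)$ restricted to "almost coprime" $q$ without losing divergence. I expect this gcd bookkeeping to be the hardest part. I would first try Cassels' device, which partitions $q$ into blocks $[2^k,2^{k+1})$ and uses that $\psi$ is essentially constant on each block. With the overlap bound in hand, the Kochen--Stone ratio is bounded below, and the zero--one law completes the divergence case.
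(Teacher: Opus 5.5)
The paper does not prove this theorem; it quotes it from Khintchine, Groshev, Sz\"usz and Schmidt. Your proposal therefore has to stand on its own, and it has a genuine gap at the step you yourself flag: summing the gcd term.

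\textbf{The case $(n,m)=(1,1)$.} Your convergence half is fine. So is the exact independence $|E_{\q}\cap E_{\q'}|=|E_{\q}||E_{\q'}|$ for linearly independent $\q,\q'$ when $n\ge 2$. But the statement includes $(n,m)=(1,1)$, and there the step cannot be closed by quasi-independence of the sets $E_q$. Take $y=0$ and $\psi(q)=1/(q\log q)$, which is monotone with $\sum\psi=\infty$. For $r<q$, the intervals of $E_q$ and $E_r$ around the $\gcd(q,r)$ common fractions $j/\gcd(q,r)$ give $|E_q\cap E_r|\ge 2\gcd(q,r)\psi(q)/q$, so your overlap bound is attained. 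Since $\sum_{r<q}\gcd(q,r)/q=\sum_{e\mid q}\varphi(e)/e-1$ has average order $\frac{6}{\pi^2}\log q$, partial summation gives $\sum_{q,r\le N}|E_q\cap E_r|\gg\log N$. Meanwhile $(\sum_{q\le N}|E_q|)^2\asymp(\log\log N)^2$, so the Kochen--Stone ratio tends to $0$.

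Your suggested repairs do not help. Restricting to primes, the obvious \emph{almost coprime} support, makes $\sum\psi$ converge for this $\psi$. Partitioning into dyadic blocks does nothing to the gcd sums inside a block. You never exhibit a support that both keeps divergence and tames the gcds.

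What is missing is a change of the \emph{sets}, not of the support. In the homogeneous case one passes to reduced fractions $\gcd(p,q)=1$. Monotonicity, via partial summation with $\sum_{q\le x}\varphi(q)/q\asymp x$, keeps $\sum\psi(q)\varphi(q)/q$ divergent, and the reduced overlaps are $\ll\psi(q)\psi(r)$. In the inhomogeneous case, the excess $\gcd(q,r)\psi(q)/q$ in the overlap is present only when $\|\tfrac{q-r}{\gcd(q,r)}y\|<\tfrac{r\psi(q)+q\psi(r)}{\gcd(q,r)}$. Controlling this needs Diophantine input on $y$: for example an inhomogeneous coprimality condition $\gcd(q,b_qp+a_q)=1$ with rational approximations $a_q/b_q$ of $y$, as in this paper's sets $\tilde A_q$, or the arguments of Sz\"usz and Schmidt.

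\textbf{The other cases.} For $(n,m)\ne(1,1)$ your framework does close, but by partial summation rather than by restricting supports. The gcd weight has bounded average order. For $n=1$, the weight $\sum_{r\le q}\gcd(q,r)^m/q^m=\sum_{e\mid q}\varphi(e)/e^m$ is at most $\sigma(q)/q$ when $m=2$ and is bounded when $m\ge3$. For $n\ge 2$, the analogous weight summed over $|\q|=q$ is $\ll q^{n-1}$ on average. Abel summation against the non-increasing $\psi^m$ then absorbs it into $\sum q^{n-1}\psi(q)^m$.

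\textbf{The upgrade to full measure.} This step also needs replacing. For $\y\notin\Z^m$, the map $\x\mapsto k\x$ sends $(\psi,\y)$-approximable matrices to $(k\psi,k\y)$-approximable ones. So the set is not mapped into itself, and a Cassels/Gallagher ergodicity argument has nothing to act on. The standard route, which the paper takes in its second lemma, combines three things: the global quasi-independence, the well-distribution $|E_{\q}\cap U|\gg|E_{\q}||U|$ for every open $U$ and large $|\q|$, and the Lebesgue density theorem.
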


The monotonicity condition in the divergence case cannot be dropped when $(n,m)=(1,1)$ by the counterexamples in \cite{DS41} for $y=0$ and \cite{Ram17} for any $y\in\R.$ In the homogeneous case $(\y=\mathbf0),$ Schmidt \cite{Sch60} removed the monotonicity condition for $n\geq3, $ Gallagher \cite{Gal65} removed it for $n=1$ and $m\geq2,$ and finally Beresnevich and Velani \cite{BV10} completed it for all cases $nm\geq2.$ 

In the inhomogeneous case $(\y \in \R^m),$ Sprind\v{z}uk's book \cite{Spr79} contains the proof of the inhomogeneous Khintchine--Groshev theorem without monotonicity for $n\geq3,$ followed by the proof by Yu \cite{Yu21} for $n=1,\ m\geq3$ and Allen--Ram\'irez \cite{AR23} for $nm\geq3.$ Allen and Ram\'irez conjectured that the monotonicity condition can also be removed for the inhomogeneous $nm=2$ cases. Hauke \cite{Hau23} proved this conjecture for the $(n,m)=(2,1)$ case with non-Liouville irrational $y\in\R,$ and the author \cite{Kim25} resolved the $(n,m)=(2,1)$ case with any $y\in\R.$

In this paper, we focus on the $(n,m)=(1,2)$ case of the Allen--Ram\'irez conjecture. In other words, we want to show that if $\sum_{q=1}^\infty \psi(q)^2=\infty,$ then $|\scrA_{1,2}(\psi,\y)|=1.$ Note that this is true when $\y=\mathbf0$ by \cite{BV10} and when $\y \in \mathbb{Q}^2$ by \cite{Kim25}. Also, Yu \cite{Yu21} showed that $|\scrA_{1,2}(\psi,\y)|=1$ if $\sum_{q=1}^\infty \psi(q)^2 (\frac{\varphi(q)}{q})^2=\infty,$ and Allen--Ram\'irez \cite{AR23} relaxed the condition to $\sum_{q=1}^\infty \psi(q)^2 (\frac{\varphi(q)}{q})^{1+\epsilon}=\infty$ for some $\epsilon>0.$ We prove the conjecture under a polynomial decay condition on $\psi.$ 

\begin{theorem}\label{thm:main}
    Let $\delta>0$ be arbitrary and $\psi:\N \to \R_{\geq 0}$ be a function satisfying $\psi(q) =O( q^{-\delta}).$ For any $\y \in \R^2$, if $\sum_{q=1}^\infty \psi(q)^2 = \infty,$ then $|\scrA_{1,2}(\psi,\y)|=1.$
\end{theorem}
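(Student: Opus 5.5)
We will use the standard divergence Borel--Cantelli machinery, namely quasi-independence on average together with a zero-one law. Write $A_q=\{\x\in[0,1)^2: |q\x-\p-\y|<\psi(q) \text{ for some } \p\in\Z^2\}$. Then $|A_q|=(2\psi(q))^2$ whenever $\psi(q)<1/2$, and $\scrA_{1,2}(\psi,\y)=\limsup A_q$.

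\textbf{Step 1: reductions.} Replacing $\psi$ by $\min(\psi,q^{-\delta'})$ costs nothing, since $\psi(q)=O(q^{-\delta})$; so we may assume $\psi(q)\le q^{-\delta}$. Next we restrict to the subsequence of $q$ with $\varphi(q)/q$ not too small. If $\sum \psi(q)^2(\varphi(q)/q)^{1+\epsilon}=\infty$ for some $\epsilon>0$, the result of Allen--Ram\'irez \cite{AR23} already gives full measure. So we may assume the divergence of $\sum\psi(q)^2$ comes from $q$ with many small prime factors. We then use the inhomogeneous zero-one law (Gallagher-type, valid for any $\y$) to reduce to showing $|\limsup A_q|>0$. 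By the Chung--Erd\H{o}s / Kochen--Stone lemma, it suffices to prove
\[
\sum_{q,r\le N}|A_q\cap A_r|\ \ll\ \Big(\sum_{q\le N}|A_q|\Big)^2
\]
along a sequence of $N$.

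\textbf{Step 2: overlap estimate.} Because $\R^2$ is a product, $A_q$ is a product of one-dimensional sets $A_q^{(1)}\times A_q^{(2)}$, each a union of $q$ intervals of length $2\psi(q)/q$ centred at $(p+y_i)/q$. For $q\ne r$, counting overlaps in each coordinate gives the standard bound
\[
|A_q^{(i)}\cap A_r^{(i)}|\le 4\psi(q)\psi(r)+O\big(\gcd(q,r)\min(\psi(q)/q,\psi(r)/r)\cdot \mathbf 1[\text{shift condition}]\big).
\]
Here the error term is present only when $\|\,(q-r)y_i/\gcd\,\|$-type quantities are small, i.e. only when the inhomogeneous shifts align modulo $\gcd(q,r)/\cdot$. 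We multiply the two coordinates. The main term gives $16\psi(q)^2\psi(r)^2$, which is fine. The cross terms involve $\psi(q)\psi(r)\cdot\gcd(q,r)\psi(r)/r$ summed over pairs, and the pure error term involves $\gcd(q,r)^2\min(\cdot)^2$.

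\textbf{Step 3: bounding the error sums (main obstacle).} In the homogeneous case one controls $\sum_{q,r}\gcd(q,r)^2\psi(q)\psi(r)/(qr)$-type sums via Gallagher's argument, exploiting that the overlap vanishes unless $\psi$ is large relative to $\gcd/\mathrm{lcm}$. Here the polynomial decay is what we exploit. We split pairs according to whether $d=\gcd(q,r)$ exceeds $(qr)^{\eta}$ for a small $\eta$ depending on $\delta$. For small $d$, the error is at most $(qr)^{2\eta}\psi(q)\psi(r)/\max(q,r)\cdot\min(\psi)$, which is summable against the main term after dyadic decomposition. The decay $\psi\le q^{-\delta}$ lets us absorb the factor $(qr)^{2\eta}$. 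For large $d$, we count pairs $q,r\le N$ with a common divisor $d\ge N^{\eta}$: there are $\ll N^2/d^2$ such pairs per $d$. The overlap is nonzero only if the intervals meet, which forces the shift condition $\|(r/d)y_i\cdot\ldots\|\lesssim d\psi/q$. Using $\psi\le q^{-\delta}$ this is a restrictive Diophantine condition, so we can bound by a divisor-sum argument. We expect this large-gcd regime to be the hardest part, in particular handling Liouville-type $\y$ uniformly. If the crude count fails, we would first try to sacrifice a set of $q$ of bounded $\psi^2$-mass (removing $q$ with an abnormally large number of divisors, which carry negligible mass by the decay assumption) before applying the counting.

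Combining the steps gives quasi-independence on average, hence positive measure, and the zero-one law upgrades this to $|\scrA_{1,2}(\psi,\y)|=1$.
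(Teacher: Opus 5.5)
There is a genuine gap, and it is the step you flag yourself: the large-$\gcd$ regime of Step~3 is never handled, and the mechanism you suggest for it fails for the unreduced sets $A_q$. You argue that a nonzero overlap with $d=\gcd(q,r)$ large forces a restrictive Diophantine condition on $\y$. For rational $\y$, however, that condition holds identically along whole families of pairs. If $\y=\mathbf{a}/b$ and $q\equiv r\pmod{b\gcd(q,r)}$, then $r\p-q\s=(q-r)\y$ is solvable. So $A_q$ and $A_r$ share $\gcd(q,r)^2$ centers in the torus, and $|A_q\cap A_r|\ge 4\gcd(q,r)^2\min\bigl(\psi(q)/q,\psi(r)/r\bigr)^2$ no matter how fast $\psi$ decays; this already happens for $\y=\mathbf 0$ and $r\mid q$. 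Very well approximable $\y$ give the same near-coincidences at infinitely many scales.

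Summed over $r\le q$, such terms can reach $\psi(q)^2\sum_{e\mid q}\varphi(e)/e^2\asymp\psi(q)^2\,q/\varphi(q)$. After your Allen--Ram\'irez reduction, the divergent part of $\sum\psi(q)^2$ lives entirely on $q$ with $\varphi(q)/q\to0$, which is exactly where this quantity is unbounded. Your fallback of discarding the offending $q$ at bounded cost is therefore self-defeating: the decay hypothesis constrains the size of $\psi$, not its support. Your small-$\gcd$ estimate, by contrast, is fine, since $\sum_{d\mid q,\,d\le D}d^2\varphi(q/d)\le qD\tau(q)$ handles any threshold $D=q^{1-\epsilon}$.

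The missing idea is to pass to reduced subsets $\tilde A_q\subseteq A_q$. These keep a fixed proportion of the measure but have no large-$\gcd$ overlaps at all, uniformly in $\y$; this is an inhomogeneous analogue of restricting to primitive points. Take $\aq/b_q$ with $\gcd(\aq,b_q)=1$, $|b_q\y-\aq|<q^{-\delta/(\delta+3)}$ and $b_q\le q^{2\delta/(\delta+3)}$, which Dirichlet's theorem provides. Keep only the boxes centered at $(\p+\y)/q$ with $\gcd(q,b_q\p+\aq)=1$. Since $m=2$, $|\tilde A_q|\ge\frac{6}{\pi^2}|A_q|$, so divergence is preserved.

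Because $(b_q\p+\aq)/q$ is then in lowest terms, $\frac1{b_q}\bigl(\frac{b_q\p+\aq}{q}-\frac{b_q\s+\aq}{r}\bigr)$ has norm at least $\frac{\gcd(q,r)}{b_qqr}$ for every $\s$. For $q>r$, the error $\bigl(\y-\frac{\aq}{b_q}\bigr)\bigl(\frac1q-\frac1r\bigr)$ is smaller than $\frac{q^{3/(\delta+3)}}{b_qqr}$. Together with $\psi(r)\le\gcd(q,r)^{-\delta}$ and the bound on $b_q$, this gives $\tilde A_q\cap A_r=\emptyset$ whenever $\gcd(q,r)>4q^{3/(\delta+3)}$.

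The large-$\gcd$ pairs then contribute nothing. The remaining pairs are exactly your small-$\gcd$ computation, using $\tau(q)\ll q^{\delta/(\delta+3)}$. Quasi-independence on average therefore holds for $\{\tilde A_q\}$, and since $\limsup\tilde A_q\subseteq\scrA_{1,2}(\psi,\y)$, the upgrade to full measure goes through. The paper does this upgrade via the well-distribution of the $\tilde A_q$.
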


This theorem can be seen as an Erd\H{o}s--Vaaler-type result. When Duffin and Schaeffer presented counterexamples for removing the monotonicity condition in the  $(n,m)=(1,1)$ case \cite{DS41}, they also conjectured that if $\sum_{q=1}^\infty \psi(q)\frac{\varphi(q)}q = \infty,$ then
$$\scrA'_{1,1}(\psi) = \{x\in[0,1): |qx-p|<\psi(q) \text{ for inf. many }(q,p)\in \N\times\Z \text{ s.t.}\gcd(q,p)=1\}$$
has full measure. In the long history of this conjecture, Erd\H{o}s \cite{Erd70} showed that if $\psi(q)=\frac1q$ or 0, then the conjecture holds, and Vaaler \cite{Vaa78} improved the condition to $\psi(q)=O(q^{-1}).$ Note that this conjecture has been proved by Koukoulopoulos and Maynard \cite{KM20}.

Chow, Hauke, Pollington and Ram\'irez \cite{CHPR25} constructed a function $\psi$ by restricting the support of any decreasing function $f$ satisfying $\lim_{q\to \infty} f(q)=0$ and $\sum_{q=1}^\infty f(q) = \infty$ so that $\sum_{q=1}^\infty \psi(q)=\infty$ but $|\scrA_{1,1}(\psi)|=0.$ Theorem \ref{thm:main} rules out the possibility of constructing an analogous counterexample in the $(n,m)=(1,2)$ case via support restriction, provided that the underlying function $f$ satisfies the polynomial decay condition $f(q)=O(q^{-\delta}).$

The proof begins by defining a reduced set $\tilde A_q$ by the condition $\gcd(q,b_q\p+\aq)=1$ where $(\frac{\aq}{b_q})_{q\in\N}$ is a sequence of rational vectors approximating $\y.$ Similar definitions can also be found in \cite{Sch64, CT24, BHV24, Kim25, TZ26}. The main idea of the proof is to choose an appropriate speed of approximation of $\y$ by $\frac{\aq}{b_q}$, which guarantees that $\tilde A_q$ and $\tilde A_r$ do not intersect for $\gcd(q,r)\gg q^{1-\epsilon}.$

\vspace{0.3cm}
\noindent \textbf{Notation.} We use Vinogradov notation where $f\ll g$ means $f\leq Cg$ for some uniform constant $C>0$, and $f\asymp g$ for $f\ll g$ and $g\ll f$. For an integer vector $\mathbf{k}=(k_1,\cdots, k_m) \in\Z^m\setminus \{\mathbf{0}\},$ we denote $\gcd(\mathbf{k})=\gcd(k_1,\cdots,k_m).$

\section{Proof of Theorem \ref{thm:main}}

For $\delta>0$ and $\y \in \R^2,$ define a sequence of pairs $(\aq,b_q) \in \Z^2 \times \N$ satisfying
$$|b_q\y-\aq|<q^{-\frac{\delta}{\delta+3}}, \quad 1\leq b_q \leq q^{\frac{2\delta}{\delta+3}}, \quad \gcd(\aq,b_q)=1.$$
The existence of such a sequence is guaranteed by Dirichlet's theorem. For each $q \in \N,$ we define
\begin{align*}
    A_q &:= \{\x \in [0,1)^2: \exists \p \in \Z^2 \text{ s.t. }|q\x - \p - \y|<\psi(q)\}, \\
    \tilde A_q &:= \{\x \in [0,1)^2: \exists \p \in \Z^2 \text{ s.t. }|q\x - \p - \y|<\psi(q) \text{ and } \gcd(q,b_q\p+\aq)=1\}.
\end{align*}
Observe that $\scrA_{1,2}(\psi,\y)=\limsup_{q\to\infty}A_q$ and $\tilde A_q \subseteq A_q.$ Our plan is to show $|\limsup_{q\to\infty} \tilde A_q|=1$ which implies that $|\scrA_{1,2}(\psi,\y)|=1$. The following lemma gives the measure of each set.

\begin{lemma}\label{lem:basic size}
    For $\psi(q)\leq \frac12,$
    $$|A_q| = 4\psi(q)^2,\quad |\tilde A_q| = 4\psi(q)^2 \prod_{p|q, p\nmid b_q} (1-p^{-2})$$
    where $p$ runs over primes. In particular, $|\tilde A_q| \asymp |A_q|.$
\end{lemma}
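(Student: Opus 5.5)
We decompose both sets according to the integer vector $\p$ and use the fact that $\x\mapsto q\x$ acts on the torus as a $q^2$-to-one covering. Write $\x\in[0,1)^2$ and put $\mathbf{u}=q\x\in[0,q)^2$. The condition $|q\x-\p-\y|<\psi(q)$ says that $\mathbf{u}$ lies in the open square of side $2\psi(q)$ centred at $\p+\y$. Since $\psi(q)\le\frac12$, the squares centred at $\p+\y$ for distinct $\p\in\Z^2$ are pairwise disjoint up to boundaries, which have measure zero. So $A_q$ is, up to a null set, a disjoint union of pieces indexed by $\p$. The same holds for $\tilde A_q$, except that only those $\p$ with $\gcd(q,b_q\p+\aq)=1$ are kept.

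The condition $\gcd(q,b_q\p+\aq)=1$ depends only on $\p \bmod q$. Translating $\p$ by $q\mathbf{k}$ shifts $\x$ by the integer vector $\mathbf{k}$, so it is natural to work on the torus $\R^2/\Z^2$. There the set $\{\x: q\x\in \p+\y+(-\psi(q),\psi(q))^2 \bmod \Z^2\}$, viewed over a single residue class of $\p \bmod q$, has measure $(2\psi(q))^2/q^2$. This is because the preimage of a small square under the $q^2$-fold cover $\x\mapsto q\x$ splits into $q^2$ copies of measure $4\psi(q)^2/q^2$, one for each residue class $\p \bmod q$. Summing over all $q^2$ residue classes gives $|A_q|=4\psi(q)^2$. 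Summing only over the admissible classes gives
$$|\tilde A_q|=\frac{4\psi(q)^2}{q^2}\,N_q,\qquad N_q:=\#\{\p\in(\Z/q\Z)^2:\gcd(q,b_q\p+\aq)=1\}.$$

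It remains to count $N_q$, and this is the only step with real content. By the Chinese remainder theorem $N_q$ is multiplicative over prime powers $p^e\,\|\,q$. Coprimality to $p^e$ is the same as coprimality to $p$, so
$$N_{p^e}=p^{2(e-1)}\cdot\#\{\p\in\mathbb{F}_p^2: b_q\p+\aq\not\equiv \mathbf{0}\pmod p\}.$$
If $p\nmid b_q$, the map $\p\mapsto b_q\p+\aq$ is a bijection of $\mathbb{F}_p^2$, so exactly one $\p$ is excluded and we get $p^{2e}(1-p^{-2})$. If $p\mid b_q$, then $b_q\p+\aq\equiv\aq \pmod p$, and $\aq\not\equiv\mathbf{0}\pmod p$ because $\gcd(\aq,b_q)=1$. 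Hence nothing is excluded and we get $p^{2e}$. Multiplying over $p\mid q$ gives $N_q=q^2\prod_{p\mid q,\,p\nmid b_q}(1-p^{-2})$, which is the stated formula. The hypothesis $\gcd(\aq,b_q)=1$ is exactly what makes the primes dividing $b_q$ drop out.

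For the final claim, note that $\prod_p(1-p^{-2})=6/\pi^2>0$. The product therefore lies in $[6/\pi^2,1]$, which gives $|\tilde A_q|\asymp|A_q|$ with absolute constants.
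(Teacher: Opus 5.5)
Your proof is correct and follows the paper's route. The paper gets $|A_q|$ from the same count of $q^2$ disjoint boxes of area $(2\psi(q)/q)^2$ on $\R^2/\Z^2$ and the comparison from $\prod_p(1-p^{-2})=6/\pi^2$; the one difference is that it cites Lemma 1 of \cite{Kim25} for the formula for $|\tilde A_q|$, whereas you prove that formula directly by a Chinese remainder theorem count of admissible residues $\p \bmod q$, correctly using $\gcd(\aq,b_q)=1$ to see that primes dividing $b_q$ impose no restriction.
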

\begin{proof}
    The first equality holds because $A_q$ consists of $q^2$ disjoint boxes in $\R^2/\Z^2,$ each having an area of  $\left(\frac{2\psi(q)}q\right)^2$. The identity for $|\tilde A_q|$ follows directly from Lemma 1 of \cite{Kim25}. Finally, the asymptotic relation is a consequence of the bounds
    \[1 \geq \prod_{p|q, p\nmid b_q} (1-p^{-2}) >\prod_{p:\text{ prime}} (1-p^{-2}) = \frac{6}{\pi^2}. \qedhere \]
\end{proof}

The condition $\sum_{q=1}^\infty \psi(q)^2=\infty$ implies that $\sum_{q=1}^\infty |A_q| =\infty$ and $\sum_{q=1}^\infty |\tilde A_q|=\infty.$ Since the sequence of sets  $\{\tilde A _q\}_{q\in\N}$ lacks the pairwise independence required for the second Borel--Cantelli lemma, we instead use the divergence Borel--Cantelli lemma, which originates from the Chung--Erd\H{o}s inequality:
$$\left|\limsup_{q \to \infty} \tilde A_q\right| \geq \limsup_{Q \to \infty} \frac{\left(\sum_{q=1}^Q |\tilde A_q|\right)^2}{\sum_{q,r=1}^Q |\tilde A_q \cap \tilde A_r|}.$$
To establish that $|\limsup_{q\to\infty} \tilde A_q|=1,$ it suffices to  show that the right-hand side of the above inequality is strictly positive. The transition from positive measure to full measure is guaranteed by the  fact that $\tilde A_q$ is well-distributed: namely, $|\tilde A_q \cap U| \gg  |\tilde A_q||U|$ holds for any open set $U\subset [0,1)^2$ provided $q$ is sufficiently large.

The preceding argument summarizes the proof of the following lemma. For the complete proof, we refer to Theorem 4 of \cite{Kim25}, which refines methods such as those found in \cite{BDV06, AR23, AR25}, whose roots can be traced back to \cite{Kno26}.

\begin{lemma}
    Suppose that $\psi(q) \leq \frac12$ for every $q\in\N$ and $\sum_{q=1}^\infty \psi(q)^2 = \infty.$ If the sequence of sets $\{\tilde A_q\}_{q\in\N}$ is quasi-independent on average, that is,
    $$\limsup_{Q \to \infty} \frac{\left(\sum_{q=1}^Q |\tilde A_q|\right)^2}{\sum_{q,r=1}^Q |\tilde A_q \cap \tilde A_r|}>0,$$
    then $\left|\limsup_{q\to \infty}\tilde A_q \right|=1.$
\end{lemma}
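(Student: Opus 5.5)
The plan is the standard two-step one: first get positive measure from the Chung--Erd\H{o}s inequality, then upgrade to full measure using local ubiquity. Set $L:=\limsup_{q\to\infty}\tilde A_q$.

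\emph{Step 1: positive measure, localized.} The Chung--Erd\H{o}s inequality quoted above, together with the quasi-independence hypothesis, gives $|L|\geq c>0$. For full measure I would prove the stronger statement $|L\cap U|\gg |U|$ for every open box $U\subset[0,1)^2$, with an implied constant independent of $U$. The Lebesgue density theorem then shows that the complement of $L$ cannot have a density point, so $|L|=1$. To get the local bound, I would apply Chung--Erd\H{o}s to the sets $\tilde A_q\cap U$:
$$|L\cap U|\geq \limsup_{Q\to\infty}\frac{\left(\sum_{q\le Q}|\tilde A_q\cap U|\right)^2}{\sum_{q,r\le Q}|\tilde A_q\cap\tilde A_r\cap U|}.$$

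\emph{Step 2: local estimates.} Two bounds are needed.
\begin{itemize}
\item[(a)] \emph{Lower bound (well-distribution).} For $q\ge q_0(U)$, $|\tilde A_q\cap U|\gg|\tilde A_q||U|$. The set $\tilde A_q$ is a union of boxes of side $2\psi(q)/q\le 1/q$, centred at $(\p+\y)/q$ with $\p$ ranging over the residue classes mod $q$ satisfying $\gcd(q,b_q\p+\aq)=1$. This coprimality condition is a product over primes $p\mid q$, $p\nmid b_q$, of the condition that $b_q\p+\aq\not\equiv\mathbf0 \pmod p$ for at least one coordinate. A sieve or CRT count of such $\p$ inside a box of side $\asymp q\cdot\mathrm{side}(U)$ shows they have density $\prod(1-p^{-2})+o(1)$ there. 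For this one should use the form of the argument in Lemma~1 of \cite{Kim25}: fix one coordinate and count in the other, using that a two-dimensional condition is being excluded. Since the density is bounded below by $6/\pi^2$, the bound follows.
\item[(b)] \emph{Upper bound for pairs.} $|\tilde A_q\cap\tilde A_r\cap U|\ll|U|\,|\tilde A_q\cap\tilde A_r|+(\text{error})$. I would get this from the same periodic structure: the intersection is $\Z^2/\mathrm{lcm}$-periodic up to box-scale effects. The error is absorbed when $\sum_{q\le Q}|\tilde A_q|\to\infty$.
\end{itemize}

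\emph{Step 3: combine.} Terms with $q<q_0(U)$ contribute $O(1)$, which is negligible against the divergent sum. Inserting (a) and (b) into the localized Chung--Erd\H{o}s bound gives
$$|L\cap U|\gg \frac{|U|^2\left(\sum|\tilde A_q|\right)^2}{|U|\sum|\tilde A_q\cap\tilde A_r|}\gg c\,|U|,$$
taken along a subsequence of $Q$ where the hypothesis holds. The density argument then finishes the proof.

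\emph{Main obstacle.} The hard step is (b) for pairs $(q,r)$ where the scales $1/q$ and $1/r$ are not small compared with $U$, or where $q$ and $r$ are close. I would avoid estimating the intersection directly. The cleaner route is the Beresnevich--Dickinson--Velani ``local ubiquity'' principle \cite{BDV06}: it needs only (a) together with a global positive-measure statement uniform in the tail, and that statement follows by applying the quasi-independence hypothesis to the tails $q\ge q_0$. This is exactly how Theorem~4 of \cite{Kim25} proceeds, so I would follow that argument and verify that it requires only $\psi\le\frac12$ and the divergence.
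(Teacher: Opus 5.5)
Your final route, which is the Chung--Erd\H{o}s inequality applied on the tails $q\ge q_0(U)$ together with the well-distribution bound (a) and a deferral to Theorem 4 of \cite{Kim25}, is essentially the paper's argument. Step (b) is not needed. The trivial bound $|\tilde A_q\cap\tilde A_r\cap U|\le|\tilde A_q\cap\tilde A_r|$, combined with (a) and the hypothesis, already gives $|L\cap U|\gg|U|^2$ uniformly over all open $U$. Full measure then follows by applying this to an open $U\supseteq[0,1)^2\setminus L$ with $|U\cap L|$ arbitrarily small, which is the standard lemma from \cite{BDV06}; your Lebesgue-density argument on small boxes cannot use a quadratic bound.
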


Since $\tilde A_q \cap \tilde A_r \subseteq A_q \cap A_r,$ the standard overlap estimate for $|A_q \cap A_r|$ (see e.g. Lemma 8 of \cite{AR23}) trivially applies to $|\tilde A_q \cap \tilde A_r|,$ yielding the following bound.
\begin{lemma}\label{lem:overlap}
    For $\psi(q),\psi(r) \leq \frac12,$ we have
    $$|\tilde A_q \cap \tilde A_r| \ll \psi(q)^2\psi(r)^2 + \psi(q)^2 \frac{\gcd(q,r)^2}{q^2}.$$
\end{lemma}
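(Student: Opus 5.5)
Since $\tilde A_q\cap\tilde A_r\subseteq A_q\cap A_r$, it suffices to bound $|A_q\cap A_r|$, and I will do this directly. The structure of $A_q$ is what makes this tractable. Writing $A_q$ modulo $\Z^2$, it is the union over $\p\in\Z^2/q\Z^2$ of the boxes
$$B_q(\p):=\frac{\p+\y}{q}+\Bigl(-\frac{\psi(q)}q,\frac{\psi(q)}q\Bigr)^2 .$$
A box centred at $(\p+\y)/q$ meets a box centred at $(\p'+\y)/r$ only if the centres are within $\psi(q)/q+\psi(r)/r\le 2\max(\psi(q)/q,\psi(r)/r)$ of each other in the maximum norm. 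Each such intersection has area at most $\min\bigl(2\psi(q)/q,2\psi(r)/r\bigr)^2$. By symmetry I may assume $\psi(q)/q\ge\psi(r)/r$, and I set $\Delta:=\psi(q)/q$.

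The first step is to count the pairs of nearby centres. The difference of the centres, multiplied by $qr$, is
$$r(\p+\y)-q(\p'+\y)=r\p-q\p'+(r-q)\y .$$
The vectors $r\p-q\p'$ range over the lattice $d\Z^2$, where $d=\gcd(q,r)$. For a fixed $\p$ modulo $q$, the admissible $\p'$ modulo $r$ therefore correspond to lattice points $\mathbf{v}\in d\Z^2$ with $|\mathbf{v}+(r-q)\y|\le 2qr\Delta$. The number of such points in a box of side $4qr\Delta$ is at most $(4qr\Delta/d+1)^2$. This count does not depend on the inhomogeneous shift $(r-q)\y$, which is why the homogeneous argument carries over.

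The second step is to sum. Some bookkeeping is needed on how often a given $\mathbf{v}$ corresponds to one pair $(\p,\p')$ modulo $(q,r)$; each residue pair gives one representative modulo $\mathrm{lcm}(q,r)$. The outcome is that the number of intersecting pairs is $\ll q^2\bigl((r\Delta/d)^2\cdot d^2+1\bigr)$ up to this normalization. More cleanly, the total intersection area is
$$\ll q^2\Bigl(\frac{qr\Delta}{d}+1\Bigr)^2\Bigl(\frac{\psi(r)}r\Bigr)^2 .$$
I then split into the two cases $qr\Delta\ge d$ and $qr\Delta<d$.

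In the case $qr\Delta\ge d$, the bound is
$$\ll q^2\frac{q^2r^2\Delta^2}{d^2}\cdot\frac{\psi(r)^2}{r^2}=\frac{q^4}{d^2}\,\Delta^2\psi(r)^2 .$$
This overcounts: I must divide by the multiplicity, since $\p$ modulo $q$ and $\p'$ modulo $r$ give $q^2r^2/\mathrm{lcm}$-type redundancy. The cleaner route, which I will follow, is to parametrise by $\mathbf{v}$ modulo $\mathrm{lcm}(q,r)$. This yields the first term $\psi(q)^2\psi(r)^2$.

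In the case $qr\Delta<d$, the "+1" term dominates. Each box of $A_q$ meets $O(1)$ boxes of $A_r$, so only $O(d^2)$ effective pairs contribute per residue class structure, each of area at most $(2\psi(q)/q)^2$. This yields the second term $\psi(q)^2 d^2/q^2$.

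The main obstacle is getting this lattice-point counting with the correct multiplicities, so that the main term is exactly $\psi(q)^2\psi(r)^2$ rather than something inflated by $d$. The obvious fallback is simply to cite Lemma 8 of \cite{AR23}, which gives exactly this estimate for $|A_q\cap A_r|$. The paper intends this citation, so the proof reduces to the containment $\tilde A_q\cap\tilde A_r\subseteq A_q\cap A_r$ together with that cited lemma.
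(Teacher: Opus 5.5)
Your fallback, the containment $\tilde A_q\cap\tilde A_r\subseteq A_q\cap A_r$ followed by Lemma 8 of \cite{AR23}, is exactly the paper's proof, so the lemma does follow from your proposal on that basis. The direct lattice-point argument you lead with, however, has a genuine error that you flag but never repair. The false step is ``for a fixed $\p$ modulo $q$, the admissible $\p'$ modulo $r$ correspond to lattice points $\mathbf{v}\in d\Z^2$''. For fixed $\p$, the vector $r\p-q\p'$ ranges over the coset $r\p+q\Z^2$, not over $d\Z^2$. An honest per-$\p$ count gives at most $(4r\Delta+1)^2$ choices of $\p'$, hence at most $(4qr\Delta+q)^2$ intersecting pairs in total. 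That only reproduces the trivial bound $|A_q\cap A_r|\ll|A_q|$. Your displayed bound $q^2(qr\Delta/d+1)^2(\psi(r)/r)^2$ carries a factor $q^2$ where $d^2$ belongs, and the later ``divide by the multiplicity'' and ``$O(d^2)$ effective pairs'' are asserted rather than derived. The gcd saving comes from the fact that for most $\p$ there is no admissible $\p'$ at all, which no per-$\p$ bound can detect.

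The repair is to count pairs rather than counting $\p'$ for each $\p$. The map $(\p\bmod q,\ \p'\bmod r)\mapsto r\p-q\p'\bmod qr\Z^2$ is a homomorphism from a group of order $q^2r^2$ onto $d\Z^2/qr\Z^2$, which has order $(qr/d)^2$. So every fibre has exactly $d^2$ elements. Set $\Delta=\max(\psi(q)/q,\psi(r)/r)$. Then the number of intersecting pairs is at most $d^2(4qr\Delta/d+1)^2=(4qr\Delta+d)^2$. Each intersection has area at most $4\min(\psi(q)/q,\psi(r)/r)^2$, and $qr\cdot\max\cdot\min=\psi(q)\psi(r)$, so
$$|A_q\cap A_r|\ll\psi(q)^2\psi(r)^2+d^2\min\Bigl(\frac{\psi(q)}q,\frac{\psi(r)}r\Bigr)^2\le\psi(q)^2\psi(r)^2+\psi(q)^2\frac{d^2}{q^2}.$$
This needs no case split. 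Using the minimum is also what justifies your ``by symmetry'' reduction, since the target bound is not symmetric in $q,r$. With this correction your direct route becomes a self-contained proof of the estimate the paper cites. As written, your proposal is complete only through the citation, just as the paper's is.
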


This overlap estimate is not sufficient to show the quasi-independence on average, because
$$\sum_{r=1}^q \frac{\gcd(q,r)^2}{q^2} =\sum_{d|q} \sum_{\substack{1\leq r' \leq q/d \\ \gcd(r',q/d)=1}} \frac{d^2}{q^2} = \sum_{d|q} \frac{d^2}{q^2}\varphi(q/d) = \sum_{e|q} \frac{\varphi(e)}{e^2}$$
is unbounded as $q \to \infty.$ Therefore, we present the following key lemma which excludes the case when $\gcd(q,r)$ is relatively big.

\begin{lemma}\label{lem:key}
    If $q>r,\ \psi(q)\leq q^{-\delta},\ \psi(r)\leq r^{-\delta}$ and $\gcd(q,r) > 4q^{\frac{3}{\delta+3}},$ then $|\tilde A_q \cap \tilde A_r|=0.$
\end{lemma}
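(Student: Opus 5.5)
The plan is to take a point $\x \in \tilde A_q \cap \tilde A_r$ and derive a contradiction. The contradiction comes from showing that a certain integer vector built from the two approximations has absolute value less than $1$, hence vanishes, and then using the coprimality condition $\gcd(q,b_q\p+\aq)=1$. Since the sets are finite unions of open boxes, a nonempty intersection has positive measure, so it suffices to show the intersection is empty.

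\textbf{Setting up.} Fix $\p,\s\in\Z^2$ with
$$|q\x-\p-\y|<\psi(q),\qquad |r\x-\s-\y|<\psi(r),\qquad \gcd(q,P)=1,$$
where $P:=b_q\p+\aq$. We will use only the coprimality for $q$, not for $r$. Put $d=\gcd(q,r)$ and $T:=b_q\s+\aq\in\Z^2$; note that $T$ uses $b_q,\aq$ rather than $b_r,\mathbf a_r$. The direct computation
$$b_q\bigl(r\p-q\s+(r-q)\y\bigr)=rP-qT+(r-q)(b_q\y-\aq)$$
holds. Also
$$r\p-q\s+(r-q)\y = q(r\x-\s-\y)-r(q\x-\p-\y).$$
Hence the integer vector $M:=(rP-qT)/d$ satisfies
$$|M|\le \frac1d\Bigl(b_q\bigl(q\psi(r)+r\psi(q)\bigr)+q\,|b_q\y-\aq|\Bigr).$$

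\textbf{Bounding each term by $q^{3/(\delta+3)}$.} This is the step where the exponents must fit, and I expect it to be the main point to check. We have $q|b_q\y-\aq|< q^{1-\frac{\delta}{\delta+3}}=q^{\frac{3}{\delta+3}}$. Since $r\ge d>4q^{3/(\delta+3)}$, we get $\psi(r)\le r^{-\delta}\le q^{-3\delta/(\delta+3)}$. Combined with $b_q\le q^{2\delta/(\delta+3)}$, this gives
$$b_q q\psi(r)\le q^{1-\frac{\delta}{\delta+3}}=q^{\frac3{\delta+3}}.$$
For the remaining term, $r<q$ and $\psi(q)\le q^{-\delta}$ give
$$b_q r\psi(q)\le q^{1-\delta+\frac{2\delta}{\delta+3}}\le q^{\frac3{\delta+3}},$$
since $1-\delta+\frac{2\delta}{\delta+3}\le 1-\frac{\delta}{\delta+3}$. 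Therefore $|M|<3q^{3/(\delta+3)}/d<\tfrac34<1$, and since $M\in\Z^2$ we conclude $M=\mathbf 0$.

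\textbf{Concluding.} From $M=\mathbf 0$ we get $\frac rd P=\frac qd T$, so $\frac qd$ divides $\frac rd P$ componentwise. Since $\gcd(\frac qd,\frac rd)=1$, it follows that $\frac qd$ divides both entries of $P$. But $\frac qd\mid q$ and $\gcd(q,P)=1$, which forces $\frac qd=1$. Then $q=d\mid r$, which is impossible because $0<r<q$. Hence $\tilde A_q\cap\tilde A_r=\emptyset$, and in particular $|\tilde A_q\cap\tilde A_r|=0$.
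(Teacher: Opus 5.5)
Your proof is correct and is essentially the paper's argument, rescaled. The paper compares the box centers $\frac{\p+\y}{q}$ and $\frac{\s+\y}{r}$, splits their difference through $\frac{\aq}{b_q}$, and uses that $\frac{b_q\p+\aq}{q}$ is reduced to get the lower bound $\frac{\gcd(q,r)}{b_q qr}$; multiplying through by $b_q qr/\gcd(q,r)$, this is exactly your statement that the nonzero integer vector $M$ would have $|M|\ge 1$, and the paper uses the same three exponent estimates and, like you, only the coprimality for $q$, so it also proves $\tilde A_q\cap A_r=\emptyset$.
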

\begin{proof}
    We will prove that $\tilde A_q \cap A_r = \emptyset,$ which is a stronger result. Let $U(\epsilon)$ be a box $(-\epsilon, \epsilon)^2 \subset \R^2$ centered at the origin. Observe that $\tilde A_q$ and $A_r$ can be expressed as unions of small boxes:
    \begin{align*}
        \tilde A_q&=\bigcup_{\substack{\p \in \Z^2 \\ \gcd(q,b_q\p+\aq)=1}} \left( \frac{\p+\y}{q} + U\left(\frac{\psi(q)}{q}\right) \right) \cap [0,1)^2, \\
        A_r &= \bigcup_{\s \in \Z^2} \left( \frac{\s+\y}{r} + U\left(\frac{\psi(r)}r\right)\right) \cap [0,1)^2.
    \end{align*}
    The distance between the centers of the boxes is
    \begin{align*}
        \left|\frac{\p+\y}q - \frac{\s+\y}r\right| &= \left|\frac{\p+\frac{\aq}{b_q} +\y -\frac{\aq}{b_q}}q - \frac{\s+\frac{\aq}{b_q}+\y-\frac{\aq}{b_q}}r\right| \\
        &= \left|\frac{1}{b_q}\left(\frac{b_q\p+\aq}{q} - \frac{b_q\s + \aq}r\right) + \left(\y-\frac{\aq}{b_q}\right)\left(\frac1q - \frac1r\right)\right|.
    \end{align*}
    Since $\gcd(q,b_q\p+\aq)=1,$ the rational vector $\frac{b_q\p+\aq}{q}$ cannot be reduced to have smaller denominator, and thus it is different from $\frac{b_q\s + \aq}r.$ Hence, we have
    $$\left|\frac{1}{b_q}\left(\frac{b_q\p+\aq}{q} - \frac{b_q\s + \aq}r\right)\right| \geq \frac1{b_q\mathrm{lcm}(q,r)}=\frac{\gcd(q,r)}{b_qqr}.$$
    From the definition of $(\aq, b_q)$ and $\gcd(q,r) > 2q^{\frac{3}{\delta+3}},$
    $$\left|\left(\y-\frac{\aq}{b_q}\right)\left(\frac1q - \frac1r\right)\right| < \frac{1}{b_qq^{\frac{\delta}{\delta+3}}} \cdot \frac1r=\frac{q^{\frac{3}{\delta+3}}}{b_qqr}<\frac{\gcd(q,r)}{2b_qqr}.$$
    Combining the above inequalities gives
    $$\left|\frac{\p+\y}q - \frac{\s+\y}r\right| > \frac{\gcd(q,r)}{2b_qqr}.$$
    On the other hand, since $\psi(q)\leq q^{-\delta} < r^{-\delta},\ \psi(r)\leq r^{-\delta} \leq \gcd(q,r)^{-\delta}<q^{-\frac{3\delta}{\delta+3}},$ $\gcd(q,r)>4q^{\frac{3}{\delta+3}}$ and $b_q\leq q^{\frac{2\delta}{\delta+3}},$ we have
    $$\frac{\psi(q)}q + \frac{\psi(r)}r < \frac{2}{r\cdot r^\delta} < \frac2{r \cdot q^{\frac{3\delta}{\delta+3}}} = \frac{2q^{\frac{3}{\delta+3}}}{q^{\frac{2\delta}{\delta+3}} \cdot qr} \leq \frac{2q^{\frac{3}{\delta+3}}}{b_qqr} < \frac{\gcd(q,r)}{2b_qqr}.$$
    Therefore,
    $$\frac{\psi(q)}q + \frac{\psi(r)}r < \frac{\gcd(q,r)}{2b_qqr} < \left|\frac{\p+\y}q - \frac{\s+\y}r\right|$$
    and it follows that
    $$\left( \frac{\p+\y}{q} + U\left(\frac{\psi(q)}{q}\right) \right) \cap \left( \frac{\s+\y}{r} + U\left(\frac{\psi(r)}r\right)\right) = \emptyset$$
    for every $\p,\s \in \Z^2$ satisfying $\gcd(q, b_q\p+\aq)=1.$ Consequently, $\tilde A_q \cap A_r = \emptyset.$
\end{proof}

\begin{proof}[Proof of Theorem \ref{thm:main}] We may assume that $\psi(q) \leq \frac12$ and $\psi(q) \leq q^{-\delta}$ for every $q \in \N$ since if we let $\psi'(q) :=\min\{c\psi(q), \frac12\}$ for small enough $0<c\leq1,$ then $\sum_{q=1}^\infty \psi(q)^2 = \infty$ implies $\sum_{q=1}^\infty \psi'(q)^2= \infty$ and $\scrA_{1,2}(\psi',\y) \subseteq \scrA_{1,2}(\psi,\y).$

We now show the quasi-independence of the sequence $\{\tilde A_q\}_{q\in \N}.$ Since $|\tilde A_q \cap \tilde A_r| = 0$ for $q>r$ and $\gcd(q,r)>4q^{\frac3{\delta+3}}$ by Lemma \ref{lem:key}, the estimates from Lemmas \ref{lem:basic size} and \ref{lem:overlap} give
    \begin{align*}
        \sum_{r=1}^q |\tilde A_q \cap \tilde A_r| &= |\tilde A_q| + \sum_{\substack{1\leq r < q \\ \gcd(q,r) \leq 4q^{\frac{3}{\delta+3}}}} |\tilde A_q \cap \tilde A_r| \\
        &\ll \sum_{\substack{1\leq r \leq q \\ \gcd(q,r) \leq 4q^{\frac{3}{\delta+3}}}} \left( \frac{\psi(q)^2}{q^2} \gcd(q,r)^2 + \psi(q)^2\psi(r)^2 \right).
    \end{align*}
    Here, the sum of the first term is bounded by
    \begin{align*}
        \sum_{\substack{1\leq r \leq q \\ \gcd(q,r) \leq 4q^{\frac{3}{\delta+3}}}} \frac{\gcd(q,r)^2}{q^2} = \sum_{\substack{d|q \\ d\leq4q^{\frac{3}{\delta+3}}}} \frac{d^2}{q^2}\varphi(q/d) = \sum_{\substack{e|q \\ e\geq\frac14q^{\frac{\delta}{\delta+3}}}} \frac{\varphi(e)}{e^2} \leq \sum_{\substack{e|q \\ e\geq\frac14q^{\frac{\delta}{\delta+3}}}} \frac1e < \frac{4\tau(q)}{q^{\frac{\delta}{\delta+3}}},
    \end{align*}
    where $\tau(q)$ is the number of divisors of $q$. Since $\tau(q) \ll q^{\frac{\delta}{\delta+3}}$ (see for example, \cite[Theorem 315]{HW08}), we have
    $$\sum_{r=1}^q |\tilde A_q \cap \tilde A_r| \ll \psi(q)^2 + \sum_{r=1}^q \psi(q)^2\psi(r)^2 \asymp |\tilde A_q| + \sum_{r=1}^q |\tilde A_q||\tilde A_r|.$$
    The condition $\sum_{q=1}^\infty \psi(q)^2 =\infty$ implies 
    $$\sum_{q,r=1}^Q |\tilde A_q \cap \tilde A_r| \ll \sum_{q=1}^Q \sum_{r=1}^q |\tilde A_q \cap \tilde A_r| \ll \sum_{q=1}^Q \left(|\tilde A_q| + \sum_{r=1}^q |\tilde A_q||\tilde A_r|\right) \ll \left(\sum_{q=1}^Q |\tilde A_q|\right)^2.$$
    Therefore, $|\limsup_{q\to\infty}\tilde A_q|=1,$ and thus $|\scrA_{1,2}(\psi,\y)|=1.$
\end{proof}

\noindent \textbf{Acknowledgement. } The author would like to thank Seonhee Lim for her guidance and helpful discussion. The author is supported by National Research Foundation of Korea, under project number RS-2025-00515082 and RS-2025-02293115.

\end{document}